\documentclass[11pt]{article}
\headsep 0pt \topmargin 0pt \oddsidemargin 0pt \evensidemargin 0pt
\textheight 21.5 true cm \textwidth 15 true cm
\parindent 20pt
\parskip 1pt
\usepackage{latexsym,amsfonts,euscript}
\usepackage{amssymb}
\usepackage{amsthm}
\usepackage{stmaryrd}
\usepackage{mathrsfs}
\usepackage{leftidx}
\usepackage{indentfirst}
\usepackage{color}

\makeatletter 
\@addtoreset{equation}{section}
\makeatother  

\def\la{\langle}\def\ra{\rangle}

\def\pfend{\hfill{$\square$}}

\def\cent#1#2{{\bf C}_{#1}(#2)}
\def\syl#1#2{{\rm Syl}_#1(#2)}
\def\norm#1#2{{\bf N}_{#1}(#2)}
\def\oh#1#2{{\bf O}_{#1}(#2)}

\def\gfitt#1{{\bf F^*}(#1)}
\def\z#1{{\bf Z}(#1)}

\newtheorem{lem}{ \bf Lemma}[section]

\newtheorem{prop}[lem]{\bf Proposition}

\newtheorem*{rmk*}{\bf Remark}
\newtheorem*{prop*}{\bf Proposition}

\newtheorem*{thm*}{\bf Theorem}
\newtheorem*{thmA}{\bf Theorem A}

\newtheorem*{thmC}{\bf Theorem D}

\newtheorem{cor}[lem]{\bf Corollary}
\newtheorem*{cor*}{\bf Corollary}
\newtheorem*{corB}{\bf Corollary B}
\newtheorem*{corC}{\bf Corollary C}
\newtheorem{hy}[lem]{\bf Hypothesis}
\newtheorem*{hy*}{\bf Hypothesis}

\title{On $\mathcal{M}$-supplemented subgroups
\thanks{The project is supported by the NSF of China (No. 12171058)}}
\date{}

\author{Yu Zeng\\
{\footnotesize\small  Dept. Mathematics, Changshu Institute of
Technology, Changshu, Jiangsu, 215500, China}\\
{\footnotesize\small E-mail: yuzeng2004@163.com}}

\begin{document}
\maketitle
\date{}

\vskip 1cm

\begin{center}\textbf{Abstract}\end{center}

Let $G$ be a finite group and $p^k$ be a prime power dividing $|G|$.
A subgroup $H$ of $G$ is called to be \emph{$\mathcal{M}$-supplemented} in
$G$ if there exists a subgroup $K$ of $G$ such that $G=HK$ and
$H_iK<G$ for every maximal subgroup $H_i$ of $H$. In this paper, we
complete the classification of the finite groups $G$ in which all
subgroups of order $p^k$ are $\mathcal{M}$-supplemented. In
particular, we show that if $k\geq 2$, then $G/\oh{p'}{G}$ is
supersolvable with a normal Sylow $p$-subgroup and a cyclic
$p$-complement.

\vskip 5cm

\bigskip

\textbf{Keywords}\,\, Finite group,  $\mathcal{M}$-supplemented subgroup.

\textbf{2020 MR Subject Classification}\,\, 20D10, 20D20
 \pagebreak

\section{Introduction}

\noindent In this paper, $G$ always denotes a finite group, $p$
always denotes a prime. Recall that a subgroup $H$ of $G$ is
\emph{complemented} in $G$ if there exists a subgroup $K$ of $G$ such that
$G=HK$ and $H\cap K=1$. A well-known theorem of Hall states that $G$
is solvable if and only if all its Sylow subgroups are complemented.
Inspired by Hall's work, a number of authors studied the structure of $G$ in which certain families of subgroups are complemented.
For instance, Monakhov and Kniahina \cite{monakhov2015}
investigated the $pd$-composition factors (the composition factors
of order divisible by $p$) of $G$ under the assumption that all
minimal $p$-subgroups are complemented in $G$, where $p$ is a given
prime divisor of $|G|$.

Following \cite{ml2009}, a subgroup $H$ of $G$ is said to be
\emph{$\mathcal{M}$-supplemented} in $G$ if $G$ admits a subgroup $K$ such
that $G=HK$ and $H_iK<G$ for every maximal subgroup $H_i$ of $H$; if
this happens, $K$ is called an \emph{$\mathcal{M}$-supplement} of $H$ in
$G$. Generally speaking, an $\mathcal{M}$-supplemented subgroup is
not necessarily a complemented subgroup. For instance, every
subgroup of order $4$ in the quaternion group of order $8$ has an
$\mathcal{M}$-supplement, however it has no complement.
Nevertheless, when $H<G$ has prime order, it is easy to see that $H$
is complemented in $G$ if and only if $H$ is
$\mathcal{M}$-supplemented in $G$.  In \cite{mz2018},  Miao and
Zhang described the nonabelian $pd$-chief factors (the chief factors
of order divisible by $p$) of $G$ under the following hypothesis.

\begin{hy}\label{hy}
Let $p^k$ be a prime power such that $p\leq p^k\leq |G|_p$, and
assume  that all subgroups of order $p^k$  of $G$ are
$\mathcal{M}$-supplemented in $G$.
    \end{hy}

For simplicity, if $G$ satisfies Hypothesis \ref{hy}, then we write
$G\in \mathcal{M}(p^k)$. In this paper, we will give a
classification (completely description) of finite groups $G$
satisfying Hypothesis \ref{hy}. Note that if $G$ satisfies the
hypothesis with $k=1$, that is, all minimal $p$-subgroups of $G$ are
complemented in $G$,  we have already obtained a classification in
\cite[Theorem 1.1]{zeng2019}. Now we only need to investigate the
case when $k\geq 2$. 
It turns out that if $G$ satisfies  Hypothesis \ref{hy} with $k\geq 2$,
then $G$ has no nonabelian $pd$-chief factor. In fact, we get the
following theorem.

\bigskip

\begin{thmA}
    Assume that $\oh{p'}{G}=1$. Then $G\in\mathcal{M}(p^k)$
    where $k\geq 2$ if and only if
    the following statements hold.

    {\rm (1)} $G=\la x\ra\ltimes P$ where $P\in\mathrm{Syl}_p(G)$;

    {\rm (2)} $\Phi(G)=\Phi(P)$ is contained in all subgroups of $G$ of order $p^{k-1}$;

    {\rm (3)} $x$ acts faithfully and via scalar multiplication on the vector space $P/\Phi(P)$.
\end{thmA}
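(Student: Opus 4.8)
The plan is to prove the two implications separately, treating the constructive sufficiency first and then the structural necessity, where I will freely use the fact recorded in the introduction that a group satisfying Hypothesis \ref{hy} with $k\ge 2$ has no nonabelian $pd$-chief factor.

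For sufficiency, assume (1)--(3) and let $H\le G$ with $|H|=p^{k}$. Since $P\trianglelefteq G$ is the unique Sylow $p$-subgroup we have $H\le P$, and since $|H|=p^{k}>p^{k-1}$, condition (2) forces $\Phi(P)\le H$; thus $\overline H:=H/\Phi(P)$ is a subspace of $V:=P/\Phi(P)$. As $x$ acts on $V$ by a scalar, every subspace of $V$ is $\la x\ra$-invariant, so $\overline H\trianglelefteq \overline G$ and hence $H\trianglelefteq G$. I then choose a vector-space complement $\overline N$ of $\overline H$ in $V$; it is automatically $\la x\ra$-invariant, so its preimage $N$ is normal in $G$ with $P=HN$ and $H\cap N=\Phi(P)$. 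Setting $K:=\la x\ra N$ gives $HK=\la x\ra P=G$, while for every maximal subgroup $H_i$ of $H$ one has $\Phi(P)\le H_i$ by (2), so $H_i\cap K=H_i\cap N=\Phi(P)$ and a direct index computation yields $|H_iK|=|G|/p<|G|$. Hence $K$ is an $\mathcal M$-supplement of $H$ and $G\in\mathcal M(p^{k})$.

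For necessity, assume $\oh{p'}{G}=1$ and $G\in\mathcal M(p^{k})$ with $k\ge 2$. First I record the reduction to a $p$-constrained configuration: the absence of nonabelian $pd$-chief factors rules out any component of order divisible by $p$, while a $p'$-component would lie in a normal $p'$-subgroup and hence in $\oh{p'}{G}=1$; therefore $\gfitt{G}=\fitt{G}=\oh{p}{G}=:Q\ne 1$ and $\cent{G}{Q}\le Q$. The crucial step is then to upgrade $Q$ to a \emph{normal Sylow $p$-subgroup}, i.e.\ to show $p\nmid|G/Q|$ so that $G=Q\rtimes C$ with $C$ a $p'$-group by Schur--Zassenhaus. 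I expect this to be the main obstacle: it is not forced by the chief-factor information alone, and I plan to establish it by induction on $|G|$, using the standard inheritance lemmas for $\mathcal M$-supplemented subgroups (passage to subgroups containing $H$ and to quotients by normal subgroups) together with a counting analysis of the supplement $K$ of a subgroup $H$ of order $p^{k}$ --- writing $P=H(K\cap P)$ by Dedekind and exploiting that $H_iK<G$ translates into $H_i(K\cap P)<P$ for each maximal $H_i$.

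Once $G=Q\rtimes C=P\rtimes C$ with $P\in\mathrm{Syl}_p(G)$, coprime action and $\cent{G}{P}\le P$ give that $C$ acts faithfully on $V=P/\Phi(P)$. I would finish as follows. A counting argument with the $\mathcal M$-supplement $K$, parallel to the sufficiency computation but run in reverse, shows that every subgroup of order $p^{k}$ both contains $\Phi(P)$ (which yields (2) after a Frattini-type argument for subgroups of order $p^{k-1}$, using that $\Phi(P)$ is the intersection of the maximal subgroups of $P$) and is normal in $G$. Normality means that every subspace of $V$ of dimension $k-\dim_{\FF_p}\Phi(P)$ is $C$-invariant; passing to intersections of such subspaces forces every line of $V$ to be $C$-invariant, so each element of $C$ acts as a scalar, giving (3), while the degenerate cases $\dim V\le 1$ and $H=P$ are checked by hand. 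Finally, a faithful scalar action embeds $C$ into $\FF_q^{\times}$ for the relevant field, so $C=\la x\ra$ is cyclic, and Gasch\"utz's theorem together with the $C$-invariance of the hyperplanes of $P$ gives $\Phi(G)=\Phi(P)$, completing (1)--(3).
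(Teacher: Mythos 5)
Your ($\Leftarrow$) direction is correct and is essentially the paper's own: both arguments lift a scalar-invariant complement of $\overline{D}$ in $\overline{P}=P/\Phi(P)$ and take $K=\la x\ra C$. The necessity direction, however, rests on a circular premise. You propose to ``freely use'' the fact that a group satisfying Hypothesis \ref{hy} with $k\geq 2$ has no nonabelian $pd$-chief factor; but in this paper that assertion is announced in the introduction as a \emph{consequence} of Theorem A (``It turns out that\dots''), not as an available prior result. The genuine prior input, Lemma \ref{mininorm} (from Miao--Zhang), gives strictly less: a minimal normal subgroup $E$ with $E\cap\Phi(G)=1$ embeds in $\mathsf{S}_p$, which leaves nonabelian factors entirely possible --- and for $k=1$ they actually occur, which is why the classification in \cite{zeng2019} is a separate (CFSG-dependent) theorem. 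Eliminating them for $k\geq 2$ is precisely the hard core of the paper's proof: Proposition \ref{outerauto} ($p\nmid|\Out(S)|$ for simple $S\leq \mathsf{S}_p$) forces $G/P$ to be a $p'$-group; in Lemma \ref{mainlem} a simple socle factor $S$ is killed by the ad hoc computation with $D=\la x\ra\times\la y\ra$ and its supplement $B$, where $\la xy\ra B$ turns out to be a normal copy of $S$, forcing the contradiction $x\in S$; and in the boundary case $|\Phi(G)|=p^{k-1}$ the nonabelian possibility is excluded in Theorem D by a Schur multiplier argument. None of this can be replaced by quoting the introduction.

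Even granting that premise, your sketch leaves the remaining hard steps as plans rather than proofs. You yourself flag the upgrade from $Q=\oh{p}{G}$ to a normal Sylow $p$-subgroup as ``the main obstacle'' and offer only an induction-plus-Dedekind outline; in the paper this drops out of Proposition \ref{outerauto} applied factorwise to the socle, but only \emph{after} passing to $G/\Phi(G)$ --- a reduction your argument never performs, and which matters because Lemma \ref{mainlem} requires $\Phi(G)=1$ and, when $|\Phi(G)|=p^{k-1}$, is unavailable even after the quotient (one lands in $\mathcal{M}(p)$), so the paper needs the separate critical-type analysis of Theorem D, including the generalized quaternion branch $P\cong Q_8$ (which realizes Theorem A with $x=1$) that your outline never confronts. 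Likewise, the claims that every subgroup of order $p^k$ contains $\Phi(P)$ and that statement (2) holds for subgroups of order $p^{k-1}$ constitute Proposition \ref{pk-1}(2), whose proof is a delicate double induction (passing through a unique-minimal-normal-subgroup configuration and an extraspecial-group contradiction), not the one-line ``Frattini-type argument'' you gesture at; and $\Phi(G)=\Phi(P)$ is immediate from Lemma \ref{frat} rather than Gasch\"utz. In short: the sufficiency half is complete and matches the paper, but the necessity half assumes the theorem's key content (abelianness of $pd$-chief factors) and defers the three substantive ingredients --- the normal Sylow subgroup via Proposition \ref{outerauto}, Proposition \ref{pk-1}, and the critical case of Theorem D --- so it does not constitute a proof.
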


\bigskip

Observe that $G$ satisfies Hypothesis \ref{hy} if and only if
$G/\oh{p'}{G}$ satisfies it (see Corollary 2.3). Hence
Theorem A and \cite[Theorem 1.1]{zeng2019} give a complete
description of finite groups $G$ satisfying Hypothesis \ref{hy} with $k\geq 2$.

The next two corollaries follow directly from Theorem A.
\bigskip

\begin{corB}
    Assume that $G\in
    \mathcal{M}(p^k)$ where $k\geq 2$. Then $G/\oh{p'}{G}$ is
    a supersolvable group with a normal Sylow $p$-subgroup and a cyclic $p$-complement.
\end{corB}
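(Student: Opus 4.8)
The plan is to reduce at once to the setting of Theorem A. Put $\bar G = G/\oh{p'}{G}$. By Corollary 2.3 we have $\bar G\in\mathcal M(p^k)$, and $\oh{p'}{\bar G}=1$; since $\oh{p'}{G}$ is a $p'$-group, $|\bar G|_p=|G|_p\ge p^k$, so the hypotheses of Theorem A hold for $\bar G$ with the same $k\ge 2$. Applying Theorem A to $\bar G$ yields $\bar G=\langle x\rangle\ltimes P$ with $P\in\mathrm{Syl}_p(\bar G)$, together with the information that $x$ acts faithfully by scalar multiplication on the $\mathbb{F}_p$-space $V=P/\Phi(P)$.

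Two of the three assertions are then immediate. Part (1) says $P$ is normal, so $\bar G$ has a normal Sylow $p$-subgroup. Because $x$ acts faithfully by a scalar $\lambda\in\mathbb{F}_p^{\times}$ on $V$, the map $x\mapsto\lambda$ embeds $\langle x\rangle$ into $\mathbb{F}_p^{\times}$; hence $\langle x\rangle$ is cyclic of order dividing $p-1$, and in particular is a cyclic $p$-complement of $\bar G$.

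It remains to prove that $\bar G$ is supersolvable, and I would do this by showing that every chief factor is cyclic of prime order. The chief factors above $P$ form a chief series of the cyclic group $\bar G/P\cong\langle x\rangle$ and are therefore cyclic. Now let $H/K$ be a $p$-chief factor, with $K\triangleleft H\le P$ both normal in $\bar G$. Since $P$ is a $p$-group acting on the nontrivial $p$-group $H/K$, the fixed points $\cent{H/K}{P}$ are nontrivial and $\bar G$-invariant, so minimality forces $\cent{H/K}{P}=H/K$; thus $P$ centralizes $H/K$, and $\bar G/\cent{\bar G}{H/K}$ is a quotient of $\bar G/P\cong\langle x\rangle$, cyclic of order dividing $p-1$. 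Finally, a cyclic group whose order divides $p-1$ has only one-dimensional irreducible $\mathbb{F}_p$-modules (the field $\mathbb{F}_p$ already contains the relevant roots of unity), so $|H/K|=p$. Hence all chief factors of $\bar G$ are cyclic and $\bar G$ is supersolvable.

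The one point requiring care — and the step I expect to be the main obstacle — is the last paragraph: the scalar-action hypothesis of Theorem A controls directly only the top chief factors inside $P/\Phi(P)$, so a priori nothing is said about chief factors buried in $\Phi(P)$. The device that resolves this is to first extract from the faithful scalar action the numerical fact that $|\langle x\rangle|$ divides $p-1$; the representation-theoretic consequence that such a cyclic group has only $1$-dimensional irreducible $\mathbb{F}_p$-modules then disposes of every $p$-chief factor simultaneously, independently of where it sits in $P$.
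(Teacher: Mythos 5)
Your proposal is correct, and it diverges from the paper's proof exactly at the step you flagged as the main obstacle. Both arguments make the same reduction to $\oh{p'}{G}=1$ via Corollary 2.3 and read off the normal Sylow $p$-subgroup and the cyclic $p$-complement from Theorem A. For supersolvability, however, the paper argues only about the chief factors between $\Phi(G)$ and $P$: the scalar action makes every subgroup of $P/\Phi(P)$ invariant under $\la x\ra$, so those factors have order $p$ and $G/\Phi(G)$ is supersolvable; the chief factors buried inside $\Phi(G)$ are then dispatched in one stroke by the closing phrase ``and so is $G$'', which silently invokes the classical Huppert theorem that a finite group with supersolvable Frattini quotient is supersolvable (equivalently, that the supersolvable groups form a saturated formation). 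You avoid that theorem entirely: from faithfulness of the scalar action you first extract $|\la x\ra|\mid p-1$, and then treat an arbitrary $p$-chief factor $H/K$ directly --- the normal Sylow subgroup $P$ centralizes it (nontrivial fixed points of a $p$-group acting on a nontrivial $p$-group, with $\cent{H/K}{P}$ being $G$-invariant because $P\unlhd G$, plus minimality), so $H/K$ is an irreducible $\mathbb{F}_p$-module for a quotient of $\la x\ra$, and since the order divides $p-1$ the field $\mathbb{F}_p$ already contains the relevant roots of unity, forcing dimension $1$. All the intermediate facts you use are standard and correctly applied. What each approach buys: the paper's proof is shorter but leans on the Frattini-quotient theorem; yours is self-contained, handles every $p$-chief factor uniformly regardless of its position in $P$, and as a by-product sharpens the corollary by showing that the cyclic $p$-complement has order dividing $p-1$.
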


\bigskip

\begin{corC}
    A $p$-group $G\in\mathcal{M}(p^k)$ if and only if $\Phi(G)$ is contained in all subgroups of $G$ of order $p^{k-1}$.
\end{corC}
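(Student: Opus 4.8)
The plan is to derive this corollary directly from Theorem A, exploiting the fact that a $p$-group is its own Sylow $p$-subgroup. First I would observe that any $p$-group $G$ satisfies $\oh{p'}{G}=1$ trivially, so the standing hypothesis of Theorem A is met, and that $G$ itself is the unique Sylow $p$-subgroup $P=G$. Consequently a $p$-complement of $G$ is trivial: in the decomposition $G=\langle x\rangle\ltimes P$ of Theorem A(1) we are forced to take $P=G$ and $\langle x\rangle=1$.

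With $x=1$, conditions (1) and (3) of Theorem A become vacuous: (1) holds with the trivial complement, and (3) asserts only that the trivial group acts faithfully (automatic) and by scalars (the identity is the scalar $1$) on $P/\Phi(P)$. Hence, for $k\ge 2$, Theorem A collapses to its condition (2) alone. Since $P=G$ gives $\Phi(P)=\Phi(G)$, condition (2) reads exactly ``$\Phi(G)$ is contained in every subgroup of $G$ of order $p^{k-1}$,'' so both implications of the corollary follow at once.

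It remains to treat the boundary case $k=1$, which lies outside the range $k\ge 2$ of Theorem A; here the asserted condition is that $\Phi(G)$ lie in every subgroup of order $p^{0}=1$, i.e.\ that $\Phi(G)=1$. For this I would argue directly. A subgroup of prime order is $\mathcal{M}$-supplemented if and only if it is complemented, so $G\in\mathcal{M}(p)$ means every minimal subgroup of $G$ is complemented. If $\Phi(G)\ne 1$, pick a subgroup $\langle y\rangle$ of order $p$ inside $\Phi(G)$; any complement $K$ would satisfy $G=\langle y\rangle K$ with $K<G$, contradicting that elements of $\Phi(G)$ are non-generators. Thus $G\in\mathcal{M}(p)$ forces $\Phi(G)=1$; conversely $\Phi(G)=1$ makes $G$ elementary abelian, in which every minimal subgroup is a direct factor and hence complemented. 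This matches the claimed equivalence at $k=1$.

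The main obstacle is not computational but a matter of bookkeeping: one must confirm that conditions (1) and (3) of Theorem A degenerate correctly for the trivial $p$-complement, so that no hidden constraint survives, and separately dispose of the $k=1$ endpoint that Theorem A does not cover. The non-generator property of $\Phi(G)$ is the one genuine input needed in that endpoint.
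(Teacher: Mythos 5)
Your proof is correct and follows essentially the same route as the paper: for $k\geq 2$ the paper likewise invokes Theorem A (your bookkeeping that conditions (1) and (3) degenerate when $\langle x\rangle=1$ is exactly what makes that reduction valid), and it treats $k=1$ separately via complemented minimal subgroups. The only difference is cosmetic: where the paper cites \cite[Lemma 3.3]{zeng2019} for the equivalence with $G$ elementary abelian at $k=1$, you give a short self-contained argument from the non-generator property of $\Phi(G)$, which is perfectly adequate.
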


\bigskip

We say that $G\in \mathcal{M}(p^k)$ is of critical type
if $k\geq 2$, $\oh{p'}{G}=1$ and $|\Phi(G)|=p^{k-1}$. In order to prove Theorem
A, the first step is to study the groups of critical type.

\bigskip

\begin{thmC}
    Assume that  $G\in \mathcal{M}(p^k)$ is
    of critical type. Then one and only one  of the following holds.

    {\rm (1)}  $G=H\ltimes P$ where $H\lesssim C_{p-1}$ and
$P=\cent{G}{P}\cong C_{p^k}$;

{\rm (2)} $p^k=2^2$ and $G\cong Q_8$.
\end{thmC}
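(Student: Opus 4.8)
The plan is to transfer the whole problem to the internal structure of a Sylow $p$-subgroup $P$ and to the single relation that $\mathcal{M}$-supplements of subgroups of order $p^k$ impose on it. Set $F=\Phi(G)$. Since the Frattini subgroup is nilpotent and $|F|=p^{k-1}$, $F$ is a nontrivial (as $k\ge 2$) $p$-group, so $F\le\oh{p}{G}$, and Hypothesis~\ref{hy} gives $|G|_p\ge p^k$. By the remark recorded in the introduction, $G$ has no nonabelian $pd$-chief factor, so every chief factor is a $p'$-group or an elementary abelian $p$-group; hence $G$ is $p$-solvable, and with $\oh{p'}{G}=1$ this yields $\gfitt{G}=\oh{p}{G}$ and $\cent{G}{\oh{p}{G}}\le\oh{p}{G}$. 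The first genuine step is to upgrade this to a \emph{normal Sylow} subgroup: I would show $\oh{p}{G}$ is itself Sylow, so that $G=P\rtimes C$ with $P:=\oh{p}{G}\in\mathrm{Syl}_p(G)$ the unique Sylow $p$-subgroup, $C$ a $p'$-group acting faithfully on $P$ (because $\cent{G}{P}\le P$). Normality is what makes every subgroup of order $p^k$ lie in $P$, hence be $\mathcal{M}$-supplemented.

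Next I would pin down $\Phi(P)$. Gaschütz's lemma gives $\Phi(P)\le\Phi(G)=F\le P$. Passing to $\bar G=G/\Phi(P)=(P/\Phi(P))\rtimes\bar C$, the module $P/\Phi(P)$ is elementary abelian and, $\bar C$ being a $p'$-group, completely reducible. Since $F\le P$, $\Phi(\bar G)=F/\Phi(P)$ is a $p$-group lying in $P/\Phi(P)$; being contained in every maximal subgroup $W\rtimes\bar C$ (with $W$ a maximal $\bar C$-submodule) it lies in the radical, which is trivial. Thus $\Phi(G)=\Phi(P)$ has order $p^{k-1}$, and I write $|P|=p^{k-1+d}$ with $d=\dim_{\FF_p}P/\Phi(P)$. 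From this I extract the basic local constraint: if $H\le P$ has order $p^k$ with $\mathcal{M}$-supplement $K$, put $L=K\cap P$ after conjugating a fixed $p$-complement into $K$. Then $[G:K]=[P:L]$ is a $p$-power, $G=HK$ forces $P=HL$, and a direct index computation turns the requirement $H_iK<G$ for every maximal $H_i<H$ into the single relation $H\cap L\le\Phi(H)$.

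The dichotomy is governed by $d$. If $d=1$ then $P$ is cyclic, and $|\Phi(P)|=p^{k-1}$ forces $P\cong C_{p^k}$; as $P$ is abelian and $\cent{G}{P}\le P$ we get $P=\cent{G}{P}$, while the faithful action of the $p'$-group $C$ embeds $C$ into the cyclic $p'$-part $C_{p-1}$ of $\Aut(C_{p^k})$. Taking $H:=C$, a complement by Schur--Zassenhaus, gives conclusion (1). If $d\ge 2$ then $P$ is noncyclic and the target is conclusion (2).

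The main obstacle is precisely the case $d\ge 2$, which I must prove incompatible with critical type unless $G\cong Q_8$. The mechanism I would use is that $d\ge 2$ produces subgroups $H$ of order $p^k$ with $\Phi(H)$ too small (not containing $\Phi(P)$, or with an elementary abelian section), and for such an $H$ the relation $H\cap L\le\Phi(H)$ forces some maximal subgroup $H_i$ to meet $L$ trivially, whence $H_iK=G$, contradicting $\mathcal{M}$-supplementation. Pushing this through should show that $P$ has a unique subgroup of order $p^{k-1}$; by the classical description of $p$-groups with a unique subgroup of prescribed order, $P$ is then cyclic (excluded here) or, only when $p=2$ and $k=2$, generalized quaternion, and $|\Phi(P)|=2$ pins $P\cong Q_8$. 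Finally I would exclude a nontrivial complement by a direct computation: the unique involution of $Q_8$ forces every supplement of a subgroup of order $4$ to contain its center, so a nontrivial $p'$-action (for instance $G\cong\SL(2,3)$) destroys $\mathcal{M}$-supplementation of that subgroup; hence $C=1$ and $G\cong Q_8$, giving (2). Mutual exclusivity of (1) and (2) is immediate, since in (1) the Sylow $p$-subgroup is abelian and self-centralizing whereas $Q_8$ is neither.
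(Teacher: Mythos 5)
There is a genuine gap, and it occurs at the very first step: you justify $p$-solvability of $G$ by invoking ``the remark recorded in the introduction'' that a group satisfying Hypothesis 1.1 with $k\geq 2$ has no nonabelian $pd$-chief factor. That remark is an announced \emph{consequence} of the paper's main theorem, whose proof runs through the very statement you are proving (Theorem D is the critical-type case feeding into Theorem A), so your argument is circular. Excluding nonabelian factors is precisely the hard content here, and the paper has dedicated machinery for it that your proposal never replaces: Lemma 2.5 (a minimal normal subgroup $E$ with $p\mid |E|$ and $E\cap\Phi(G)=1$ embeds in $\mathsf{S}_p$), Proposition 2.3 ($p\nmid|\Out(S)|$ for simple $S\leq\mathsf{S}_p$), and, in the cyclic-Sylow case, a Schur-multiplier argument: if the socle $L/\Phi(G)$ of $G/\Phi(G)$ were nonabelian, then cyclicity of $P$ gives $\cent{L}{\Phi(G)}=L$ and $L=L'$, so $\Phi(G)\lesssim \mathrm{M}(L/\Phi(G))$, contradicting $p\nmid|\mathrm{M}(L/\Phi(G))|$ for groups with cyclic Sylow $p$-subgroups. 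Without some substitute for this, your structural frame $G=P\rtimes C$ with $P=\oh{p}{G}\in\syl{p}{G}$ is unsupported; note also that even granting $p$-solvability and $\oh{p'}{G}=1$, the subgroup $\oh{p}{G}$ need not be Sylow in general (e.g.\ $\mathsf{S}_4$, $p=2$), and you only say you ``would show'' it is, with no mechanism indicated.

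A second, smaller hole is the case $d\geq 2$: the claim that the relation $H\cap L\leq\Phi(H)$ ``should show'' $P$ has a unique subgroup of order $p^{k-1}$ is left as a sketch, and the classical classification you appeal to concerns a unique subgroup of order $p$, not $p^{k-1}$, so some reduction is still owed. The paper's mechanism at this point is cleaner and worth noting: if $D\leq P$ of order $p^k$ is noncyclic, then $\Phi(G)\leq D$ (Proposition 2.6) but $\Phi(G)\not\leq\Phi(D)$, so $D=U\Phi(G)$ for some maximal $U<D$, and the nongenerator property of $\Phi(G)$ gives $G=DB=(UB)\Phi(G)=UB<G$ for any $\mathcal{M}$-supplement $B$, a contradiction; then Huppert III.8.4 yields $P$ cyclic or generalized quaternion directly. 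Your endgame pieces are sound where they are concrete --- the $d=1$ case correctly produces conclusion (1), and your direct exclusion of a nontrivial $p'$-action on $Q_8$ (e.g.\ $\SL(2,3)$, where every supplement of a cyclic subgroup of order $4$ contains the central involution) is a valid alternative to the paper's socle argument via Lemma 2.5 --- but the proposal as written does not close the nonsolvable case and therefore does not prove the theorem.
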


Note that the main results in \cite{mz2018}, \cite{monakhov2015} and \cite{zeng2019} depend on
the classification of finite simple groups (CFSG), but the proofs in
this paper are CFSG-free.

\section{Proofs}

\noindent We use standard notation in group theory, as in
\cite{huppert}. 
We start with  some elementary properties of
$\mathcal{M}$-supplemented subgroup.

\begin{lem} \label{l201} {\rm (\cite[Lemma 2.1]{ml2009})}
 Let $H$ be a subgroup of $G$. Then the following hold.

{\rm (1)} If $H$ is $\mathcal{M}$-supplemented in $G$, then $H$ is
$\mathcal{M}$-supplemented in $M$ whenever $H \leq M\leq G$;

{\rm (2)} If $H$ is $\mathcal{M}$-supplemented in $G$, then $H/N$ is
$\mathcal{M}$-supplemented in $G/N$ for every $G$-invariant subgroup $N$ of $H$;

{\rm (3)}  If $N$ is a normal subgroup of $G$ with $(|H|, |N|)=1$,
then $H$ is $\mathcal{M}$-supplemented in $G$ if and only if $HN/N$
is $\mathcal{M}$-supplemented in $G/N$.
\end{lem}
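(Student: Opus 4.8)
The three assertions are all of the form ``transfer an $\mathcal{M}$-supplement along an inclusion or a quotient map,'' so the plan is to start from a given $\mathcal{M}$-supplement $K$ of $H$ in $G$ and manufacture the required supplement either by intersecting with $M$ (for (1)) or by passing to the image in $G/N$ (for (2) and (3)); the only real tool is Dedekind's modular law. For (1), put $K_0=K\cap M$. Since $H\le M$, the modular law gives $M=M\cap G=M\cap HK=H(K\cap M)=HK_0$, so $K_0$ supplements $H$ in $M$. If $H_i$ is a maximal subgroup of $H$ with $H_iK_0=M$, then $M\subseteq H_iK$, hence $H\subseteq H_iK$; together with $K\subseteq H_iK$ this forces $G=HK\subseteq H_iK$, contradicting $H_iK<G$. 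Thus $H_iK_0<M$ for every maximal $H_i$, and $K_0$ is an $\mathcal{M}$-supplement of $H$ in $M$.

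For (2), with $N\trianglelefteq G$ and $N\le H$, I would take $\overline{K}=KN/N$ as the candidate supplement of $\overline{H}=H/N$ in $\overline{G}=G/N$. One checks $\overline{H}\,\overline{K}=HKN/N=G/N$ at once. Every maximal subgroup of $H/N$ has the form $H_i/N$ with $H_i$ maximal in $H$ and $N\le H_i$; since $N\le H_i$ and $N\trianglelefteq G$ we have $H_iKN=H_iK$, so $(H_i/N)\overline{K}=H_iK/N<G/N$ precisely because $H_iK<G$. Hence $\overline{K}$ is an $\mathcal{M}$-supplement of $H/N$ in $G/N$; this part is routine once the coset bookkeeping $H_iKN=H_iK$ is recorded.

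For (3) the two directions split. The clean half is the converse: a supplement of $HN/N$ in $G/N$ is automatically of the form $K/N$ with $N\le K\le G$, and since $HN/N\cong H$ (because $H\cap N=1$) its maximal subgroups are exactly the $H_iN/N$; unwinding $\overline{H}\,\overline{K}=\overline{G}$ and $\overline{H_i}\,\overline{K}<\overline{G}$ yields $G=HK$ and $H_iK<G$, so $K$ is an $\mathcal{M}$-supplement of $H$ in $G$. The forward direction is where the main obstacle lies: given an $\mathcal{M}$-supplement $K$ of $H$ in $G$, the image $KN/N$ behaves well only after one knows $N\le K$, and the key point I would isolate and prove is exactly this. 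If $G=HK$ with $N\trianglelefteq G$ and $(|H|,|N|)=1$, then $N\le K$: for each prime $q\mid|N|$ one has $q\nmid|H|$, hence $|G|_q=|K|_q$, so $K$ contains a Sylow $q$-subgroup $P$ of $G$; then $N\cap P$ is a Sylow $q$-subgroup of $N$ (as $N\trianglelefteq G$) and lies in $N\cap K$, whence $|N|_q$ divides $|N\cap K|$ for every $q$ and therefore $N\cap K=N$. Once $N\le K$ is established we have $KN=K$, and $\overline{K}=K/N$ supplements $HN/N$ in $G/N$ by the same computation as in the converse. I expect the verification that the supplement necessarily contains $N$ to be the one genuinely substantive step, parts (1), (2) and the converse of (3) being formal consequences of the modular law together with the isomorphism $HN/N\cong H$.
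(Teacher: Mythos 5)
Your proof is correct and complete: part (1) via $K_0=K\cap M$ and Dedekind's modular law, part (2) via the image $KN/N$ with the bookkeeping $H_iKN=H_iK$, and part (3) via the correspondence $HN/N\cong H$ (from $H\cap N=1$) together with the genuinely substantive Sylow-order argument showing any $K$ with $G=HK$ and $(|H|,|N|)=1$ must contain $N$ --- all steps check out, including the care taken that the products $H_iK$ need only be compared as sets. The paper itself gives no proof, merely citing \cite[Lemma 2.1]{ml2009}, so there is no in-paper argument to diverge from; your route is the standard one for this lemma and is consistent with the cited source.
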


\begin{cor}\label{cr}
  Let $G$ be a group. Then the following statements hold.

{\rm (1)}
  If  $G\in\mathcal{M}(p^k)$, then  $H\in\mathcal{M}(p^k)$ whenever $H\leq G$ with $p^k\mid |H|$;

  {\rm (2)}
  If $G\in\mathcal{M}(p^k)$,
  then $G/N\in \mathcal{M}(p^{k-s})$ for every $G$-invariant $p$-subgroup $N$ of $G$
  such that $|N|=p^s\leq p^k$;

  {\rm (3)} $G\in\mathcal{M}(p^k)$ if and only if $G/\oh{p'}{G}\in\mathcal{M}(p^k)$.
\end{cor}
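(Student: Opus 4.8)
The plan is to prove Corollary 2.3 by combining the three parts of Lemma 2.1 with the definition of the class $\mathcal{M}(p^k)$. I will treat each part in turn.

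For part (1), suppose $G\in\mathcal{M}(p^k)$ and let $H\leq G$ with $p^k\mid|H|$. I must show every subgroup of order $p^k$ in $H$ is $\mathcal{M}$-supplemented in $H$. So let $R\leq H$ with $|R|=p^k$. Since $R$ is also a subgroup of $G$ of order $p^k$, the hypothesis $G\in\mathcal{M}(p^k)$ gives that $R$ is $\mathcal{M}$-supplemented in $G$. Now I invoke Lemma \ref{l201}(1) with the chain $R\leq H\leq G$: being $\mathcal{M}$-supplemented in $G$ forces $R$ to be $\mathcal{M}$-supplemented in $H$. As $R$ was arbitrary, $H\in\mathcal{M}(p^k)$.

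For part (2), let $N\trianglelefteq G$ be a $p$-subgroup with $|N|=p^s\leq p^k$, and I want $G/N\in\mathcal{M}(p^{k-s})$. The natural approach is to take an arbitrary subgroup $\overline{R}\leq G/N$ of order $p^{k-s}$ and lift it: its preimage $R$ in $G$ contains $N$ and has order $p^k$. By hypothesis $R$ is $\mathcal{M}$-supplemented in $G$, and since $N$ is $G$-invariant and contained in $R$, Lemma \ref{l201}(2) yields that $R/N=\overline{R}$ is $\mathcal{M}$-supplemented in $G/N$. The one point deserving care is that the correspondence between subgroups of $G/N$ of order $p^{k-s}$ and subgroups of $G$ of order $p^k$ containing $N$ is a bijection given by the preimage map, so every relevant $\overline{R}$ arises this way; this is routine by the correspondence theorem.

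For part (3), I would prove both directions using Lemma \ref{l201}(3) together with the fact that $\oh{p'}{G}$ is a normal subgroup of order coprime to $p$. Writing $N=\oh{p'}{G}$, a subgroup $R\leq G$ of order $p^k$ satisfies $(|R|,|N|)=1$, so Lemma \ref{l201}(3) says $R$ is $\mathcal{M}$-supplemented in $G$ if and only if $RN/N$ is $\mathcal{M}$-supplemented in $G/N$; moreover $RN/N\cong R$ has order $p^k$. The forward direction is then immediate. For the converse, given $G/N\in\mathcal{M}(p^k)$, every subgroup of $G/N$ of order $p^k$ is a Sylow-type image of some order-$p^k$ subgroup of $G$, and one recovers the $\mathcal{M}$-supplementedness in $G$ from the equivalence. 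The main obstacle I anticipate is not any deep argument but the bookkeeping in part (3): I must verify that the map $R\mapsto RN/N$ sets up a correspondence between the order-$p^k$ subgroups of $G$ and those of $G/N$, which follows from the Schur--Zassenhaus-flavoured fact that $p$-subgroups of $G/N$ lift to $p$-subgroups of $G$ and that distinct order-$p^k$ subgroups of $G$ complementary to $N$ project to distinct subgroups. Once this correspondence is in place, all three statements reduce to direct applications of the respective parts of Lemma \ref{l201}.
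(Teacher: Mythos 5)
Your proposal is correct and takes essentially the same route as the paper, whose entire proof is the single line ``It follows directly from Lemma \ref{l201}''; you have merely supplied the routine correspondence bookkeeping that the paper leaves implicit, and all three parts check out. One minor caveat: the side claim in your part (3) that distinct order-$p^k$ subgroups of $G$ complementary to $N$ project to distinct subgroups of $G/N$ is actually false in general (for $G=\mathsf{S}_3$, $p=2$, $N=\oh{2'}{G}=A_3$, all three subgroups of order $2$ have the same image), but this injectivity is never needed: the forward direction uses only surjectivity of $R\mapsto RN/N$ onto the order-$p^k$ subgroups of $G/N$, and the converse uses only that $|RN/N|=p^k$ together with the ``if and only if'' in Lemma \ref{l201}(3).
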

\begin{proof}
    It follows directly from Lemma \ref{l201}.
\end{proof}








A more general version of the next result was proved by Wielandt in
\cite{wielandt 1967}. For the reader's convenience, we states a
proof of Proposition \ref{outerauto}  based on \cite[Theorem
3]{cameron1972}.

\begin{prop}\label{outerauto}
Let $S$ be a nonabelian simple group with $p\mid |S|$. Assume that
$S$ is a subgroup of $\mathsf{S}_p$, the symmetric group of degree
$p$.   Then $p\nmid |\mathrm{Out}(S)|$
\end{prop}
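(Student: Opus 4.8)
The plan is to realise automorphisms of $S$ as permutations in $\mathsf{S}_p$, via the normaliser of $S$ in the symmetric group. First I would record the local structure at $p$. Since $p\mid|S|$ and $S\le\mathsf{S}_p$, Cauchy's theorem gives an element of order $p$ in $S$, which in $\mathsf{S}_p$ can only be a $p$-cycle; hence a Sylow $p$-subgroup $P\in\syl p S$ is generated by a $p$-cycle and has order $p$, and $S$ is a \emph{transitive} group of prime degree $p$. As the centraliser of a $p$-cycle in $\mathsf{S}_p$ is the cyclic group it generates, we get $\cent S P=P$ and $\norm{\mathsf{S}_p}{P}/P\cong C_{p-1}$, so $\norm S P/\cent S P$ embeds into $C_{p-1}$. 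Because $S$ is nonabelian simple it is not contained in the solvable group $\mathrm{AGL}(1,p)$, so Burnside's theorem on transitive groups of prime degree forces $S$ to be $2$-transitive; in particular $S$ is primitive and, having order at least $p(p-1)$, is not regular.

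Next I would exploit primitivity. For a primitive, non-regular group the centraliser in the full symmetric group is trivial, so $\cent{\mathsf{S}_p}{S}=1$, and conjugation gives an embedding $M:=\norm{\mathsf{S}_p}{S}\hookrightarrow\Aut(S)$ whose image contains $\mathrm{Inn}(S)\cong S$. Since $|\mathsf{S}_p|_p=p$, the group $M$ has a Sylow $p$-subgroup of order $p$. It therefore suffices to prove $\Aut(S)=\mathrm{Inn}(S)\,M$: then $\Out(S)\cong M/\mathrm{Inn}(S)$, whose order divides $|M|/|S|$, and the $p$-part of the latter is $p/p=1$, giving $p\nmid|\Out(S)|$.

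To prove $\Aut(S)=\mathrm{Inn}(S)\,M$, equivalently that a Sylow $p$-subgroup $Q$ of $\Aut(S)$ has order $p$, I would argue by contradiction and suppose $|Q|>p$. Taking $\tilde P=\langle\mathrm{inn}_\sigma\rangle\le Q$ (a Sylow $p$-subgroup of $\mathrm{Inn}(S)$), a proper subgroup of a $p$-group is properly contained in its normaliser, so I may pick a $p$-element $\beta\in\norm{Q}{\tilde P}\setminus\tilde P$. As $\norm{\Aut(S)}{\tilde P}/\cent{\Aut(S)}{\tilde P}$ embeds into $\Aut(\tilde P)\cong C_{p-1}$, a $p'$-group, the $p$-element $\beta$ already lies in $\cent{\Aut(S)}{\tilde P}$, i.e.\ $\beta(\sigma)=\sigma$; moreover $\beta\notin\mathrm{Inn}(S)$, since $\cent{\mathrm{Inn}(S)}{\tilde P}=\tilde P$. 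The crucial point — where Cameron's Theorem 3 enters — is that $\beta$ stabilises a conjugacy class $\mathcal C=\{S_\omega:\omega\in\Omega\}$ of point stabilisers (index-$p$, hence self-normalising, subgroups, so $|\mathcal C|=p$). Granting this, the assignment $S_\omega\mapsto\beta(S_\omega)$ defines a permutation $\hat\beta$ of $\mathcal C\cong\Omega$ with $\hat\beta\,\rho(s)\,\hat\beta^{-1}=\rho(\beta(s))$ for all $s\in S$, so $\hat\beta\in M$ realises $\beta$ and has the same ($p$-power) order. Taking $s=\sigma$ shows $\hat\beta$ centralises the $p$-cycle $\rho(\sigma)$, whence $\hat\beta\in\cent{\mathsf{S}_p}{\rho(\sigma)}=\langle\rho(\sigma)\rangle$; thus $\beta$ is conjugation by a power of $\sigma$, i.e.\ $\beta\in\mathrm{Inn}(S)$, a contradiction. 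Hence $|Q|=p$, as desired.

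The main obstacle is precisely the assertion that a $p$-element automorphism stabilises a conjugacy class of degree-$p$ point stabilisers. Equivalently, $\Out(S)$ permutes the conjugacy classes of index-$p$ subgroups of $S$, and one must rule out a $p$-element moving them in a nontrivial cycle, which would require at least $p$ such classes; controlling the number and fusion of these prime-degree representations is the structural input supplied by Cameron's Theorem 3 (and in greater generality by Wielandt). Everything else is routine bookkeeping with Burnside's prime-degree theorem and the vanishing of $\cent{\mathsf{S}_p}{S}$.
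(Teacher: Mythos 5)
Your proposal is correct and takes essentially the paper's route: Burnside's prime-degree theorem to get double transitivity, $\cent{\mathsf{S}_p}{S}=1$, Cameron's Theorem 3 for your ``crucial point'' (as the paper quotes it, $\norm{\mathsf{S}_p}{S}\unlhd \Aut(S)$ with $2$-group quotient, so any $p$-element of $\Aut(S)$ --- $p$ being odd here --- lies in $\norm{\mathsf{S}_p}{S}$ and hence permutes the point stabilisers, which is exactly the assertion you defer to Cameron), and the identity $\cent{\mathsf{S}_p}{\sigma}=\langle\sigma\rangle$ to finish, the paper merely packaging the endgame as a Frattini-argument count of $|\norm{\mathsf{S}_p}{S}:S|$ rather than your contradiction with an extra $p$-element $\beta$. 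The one blemish is your claim that proving $\Aut(S)=\mathrm{Inn}(S)M$ is ``equivalent'' to $|Q|=p$: the former is strictly stronger and in fact false in general (for $S=\PSL(2,11)\leq \mathsf{S}_{11}$ the outer automorphism swaps the two conjugacy classes of index-$11$ subgroups, so $M$ induces only $\mathrm{Inn}(S)$), but this is harmless since what your argument actually establishes is the sufficient statement $|\Aut(S)|_p=p$.
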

\begin{proof}
Obviously, $p$ is odd and $S$ is a transitive subgroup of
$\mathsf{S}_p$.  Let $P\in\syl{p}{S}$. Note that $P\cong C_p$  acts
regularly on $\{1,\cdots,p\}$.   This yields by Frattini argument
that $S=P\cdot H$ where $H\in\mathrm{Hall}_{p'}(S)$ is a point
stablizer. Since $S$ is nonsolvable, it follows by \cite[Corollary
3.5B]{dixon} that $S$ is a doubly transitive group. 
Note that $\cent{\mathsf{S}_p}{S}\leq \cent{\mathsf{S}_p}{P}=P\leq S$ and $\cent{\mathsf{S}_p}{S}\cap S=1$, and so $\cent{\mathsf{S}_p}{S}=1$.
Without loss of
generality, we identify $\norm{\mathsf{S}_p}{S}$ and $S$ as
subgroups of $\mathrm{Aut}(S)$.
  Then by \cite[Theorem 3]{cameron1972}, $\norm{\mathsf{S}_p}{S}\unlhd \mathrm{Aut}(S)$, and $\mathrm{Aut}(S)/\norm{\mathsf{S}_p}{S}$ is a $2$-group.
  So it suffices to show $p\nmid |\norm{\mathsf{S}_p}{S}/S|$.
  Write $N=\norm{\mathsf{S}_p}{S}$.
  Then by Frattini argument, $N=S\norm{N}{P}$.
  Hence
  \begin{center}
    $|N/S|=|\norm{N}{P}/\norm{S}{P}|~\Big{|}~|\norm{N}{P}:\cent{S}{P}|$.
  \end{center}
Observe that $\cent{S}{P}\leq \cent{\mathsf{S}_p}{P}=P$, and so  
\[
  \norm{N}{P}/\cent{S}{P}=\norm{N}{P}/P\leq \norm{\mathsf{S}_p}{P}/P= \norm{\mathsf{S}_p}{P}/\cent{\mathsf{S}_p}{P}\lesssim {\rm Aut}(C_p)\cong C_{p-1},
  \]
  and the result follows.
\end{proof}

\begin{lem}\label{frat} {\rm (\cite[Lemma 2.4]{GZ})} Let $P$ be a normal Sylow $p$-subgroup of
$G$. Then  $\Phi(P)=P\cap \Phi(G)$.\end{lem}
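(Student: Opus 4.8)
The plan is to prove the two inclusions $\Phi(P)\leq P\cap\Phi(G)$ and $P\cap\Phi(G)\leq\Phi(P)$ separately. Throughout I use that $\Phi(P)$ is characteristic in $P$ and hence, since $P\trianglelefteq G$, normal in $G$.

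For the inclusion $\Phi(P)\leq P\cap\Phi(G)$ it suffices to show $\Phi(P)\leq\Phi(G)$, since $\Phi(P)\leq P$ is automatic. I would argue by the non-generator property of the Frattini subgroup: if $\Phi(P)\not\leq\Phi(G)$, pick a maximal subgroup $M$ of $G$ with $\Phi(P)\not\leq M$; then $\Phi(P)M=G$ by maximality and normality of $\Phi(P)$. Applying the Dedekind modular law inside $P$ gives $P=P\cap\Phi(P)M=\Phi(P)(P\cap M)$, and since the elements of $\Phi(P)$ are non-generators of $P$ this forces $P=P\cap M$, i.e. $P\leq M$; but then $\Phi(P)\leq P\leq M$, a contradiction. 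Hence $\Phi(P)\leq\Phi(G)$. (This is just Gasch\"utz's observation that $\Phi(N)\leq\Phi(G)$ for any $N\trianglelefteq G$.)

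The reverse inclusion $P\cap\Phi(G)\leq\Phi(P)$ is the main point, and I would obtain it by passing to $\bar G=G/\Phi(P)$, where $\bar P=P/\Phi(P)$ is an elementary abelian normal Sylow $p$-subgroup. The goal reduces to showing $\bar P\cap\Phi(\bar G)=1$: using the first inclusion we have $\Phi(P)\leq P\cap\Phi(G)$, so the image of $P\cap\Phi(G)$ in $\bar G$ is $(P\cap\Phi(G))/\Phi(P)$ and lies in $\bar P\cap\Phi(\bar G)$ (because $\Phi(G)/\Phi(P)\leq\Phi(\bar G)$). To see $\bar P\cap\Phi(\bar G)=1$ I would exhibit enough maximal subgroups explicitly: by Schur--Zassenhaus $\bar P$ has a complement $\bar H$ in $\bar G$, and viewing $\bar P$ as an $\mathbb F_p\bar H$-module, Maschke's theorem (valid as $p\nmid|\bar H|$) gives a decomposition $\bar P=\bar P_1\times\cdots\times\bar P_r$ into irreducible $\bar H$-submodules. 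Each $\bar M_i=\big(\prod_{j\neq i}\bar P_j\big)\bar H$ is then a maximal subgroup of $\bar G$ (an intermediate subgroup would yield an $\bar H$-submodule strictly between $\prod_{j\neq i}\bar P_j$ and $\bar P$, impossible by irreducibility of $\bar P_i$), and $\bigcap_i\bar M_i=\bar H$, whence $\Phi(\bar G)\leq\bar H$ and $\bar P\cap\Phi(\bar G)\leq\bar P\cap\bar H=1$.

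The routine parts are the modular-law manipulation and the verification that the $\bar M_i$ are maximal with intersection $\bar H$; the step deserving the most care is the reduction to the elementary abelian quotient together with the use of complete reducibility, for it is exactly here that normality of the Sylow subgroup is essential — it guarantees the coprime complement $\bar H$ and hence the applicability of Maschke's theorem. Combining the two inclusions yields $P\cap\Phi(G)=\Phi(P)$.
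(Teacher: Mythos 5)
Your proof is correct and complete. Note first that the paper itself gives no argument for this lemma: it simply cites \cite[Lemma 2.4]{GZ}, so there is no internal proof to compare against, and your self-contained argument is a legitimate replacement. Both halves check out: the first inclusion is exactly Gasch\"utz's observation $\Phi(N)\leq\Phi(G)$ for $N\trianglelefteq G$, and your Dedekind-plus-non-generator manipulation is the standard proof of it. The reduction to $\bar G=G/\Phi(P)$ is valid because $\Phi(P)\trianglelefteq G$ and $\Phi(P)\leq\Phi(G)$ give $\Phi(\bar G)=\Phi(G)/\Phi(P)$, so $\bar P\cap\Phi(\bar G)=(P\cap\Phi(G))/\Phi(P)$; and your maximal subgroups $\bar M_i=\bigl(\prod_{j\neq i}\bar P_j\bigr)\bar H$ do satisfy $\bigcap_i\bar M_i=\bar H$ (the uniqueness of the decomposition $x=uh$ in $\bar P\rtimes\bar H$ forces $u\in\bigcap_i\prod_{j\neq i}\bar P_j=1$), whence $\bar P\cap\Phi(\bar G)\leq\bar P\cap\bar H=1$. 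Two minor remarks: you invoke Schur--Zassenhaus, but since $\bar P$ is abelian only the elementary (Gasch\"utz/cohomological) case is needed; and the second half could be shortcut by quoting Gasch\"utz's splitting theorem directly --- an abelian normal subgroup $\bar P$ with the property that $\Phi(\bar G)\cap \bar P$ is to be computed splits off modulo its Frattini intersection --- but your explicit Maschke decomposition proves the needed splitting from scratch, which is arguably preferable in a paper advertising self-contained, CFSG-free arguments.
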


 \begin{lem} \label{mininorm}{\rm(\cite[Theorem 3.4]{mz2018})}
 Let $E$ be a minimal normal subgroup of $G$ such that $p\mid |E|$ and
$E\cap \Phi(G)=1$. If $G\in\mathcal{M}(p^k)$, then $E\lesssim \mathsf{S}_p$.
\end{lem}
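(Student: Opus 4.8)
The plan is to first convert the $\mathcal{M}$-supplement condition into a transparent internal criterion for $p$-subgroups, and then to reduce the conclusion $E\lesssim\mathsf{S}_p$ to the existence of a single index-$p$ subgroup of $E$. For the first step, let $H$ be any subgroup of order $p^k$ and suppose $K$ satisfies $G=HK$. Every maximal subgroup $H_i$ of the $p$-group $H$ has index $p$ and is normal in $H$, so the Dedekind modular law gives $H\cap H_iK=H_i(H\cap K)$, whence $H_iK=G$ if and only if $H_i(H\cap K)=H$, i.e. if and only if $H\cap K\not\leq H_i$. Intersecting over all maximal subgroups of $H$, whose intersection is $\Phi(H)$, I obtain the clean reformulation: $H$ is $\mathcal{M}$-supplemented in $G$ if and only if there is $K\leq G$ with $G=HK$ and $H\cap K\leq\Phi(H)$. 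In particular $|G:K|=|H:H\cap K|\geq|H:\Phi(H)|\geq p$, and when $H$ is elementary abelian (so $\Phi(H)=1$) this says exactly that $H$ is complemented in $G$. This criterion is the computational engine for everything that follows.

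For the target, write $E=S_1\times\cdots\times S_n$ with $S_i\cong S$ simple and $p\mid|S|$. Since a Sylow $p$-subgroup of $\mathsf{S}_p$ is cyclic of order $p$ and an element of order $p$ in $\mathsf{S}_p$ is a single $p$-cycle, every faithful action of a group of order divisible by $p$ on $p$ points is automatically transitive; conversely a core-free subgroup $U$ of $E$ of index $p$ yields, via the action on its cosets, a faithful transitive representation of $E$ of degree $p$, that is, $E\lesssim\mathsf{S}_p$. Hence it suffices to produce one such $U$. Burnside's theorem that a transitive group of prime degree either lies in the one-dimensional affine group or is $2$-transitive, which is CFSG-free, then shows that such a $U$ forces $E\cong C_p$ or ($n=1$ and $S\lesssim\mathsf{S}_p$), matching the later use of the lemma.

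To build this subgroup I would exploit $E\cap\Phi(G)=1$: choose a maximal subgroup $M$ with $E\not\leq M$, so $G=EM$; the core $M_G=\bigcap_{g\in G}M^g$ is normal and does not contain the minimal normal subgroup $E$, so $E\cap M_G=1$ and $E$ embeds in the primitive group $G/M_G$, acting faithfully and, as a nontrivial normal subgroup of a primitive group, transitively on $\Omega=\{Mg\}$ with point stabiliser $E\cap M$. In the abelian case $E\cap M\trianglelefteq G$ forces $E\cap M=1$, hence $G=E\rtimes M$ with $E$ an irreducible $\mathbb{F}_pM$-module. I would then feed the engine a carefully chosen $H$ of order $p^k$: when $k$ does not exceed the $p$-rank available inside $\syl{p}{E}$, taking $H\leq\syl{p}{E}$ the reformulation says $H$ must be complemented, and irreducibility of $E$ obstructs complements for a suitable $H$ unless $\dim_{\mathbb{F}_p}E=1$; for larger $k$ one takes $H$ straddling $E$ and $P\cap M$ (with $P\in\syl{p}{G}$) and uses a supplement $K$ together with $G=EM$ to read off a subgroup of index $p$ in $E$. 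The nonabelian case runs in parallel, the transitive action of $E$ on $\Omega$ converting the $p$-local data carried by $K$ into the required index-$p$ subgroup of $E$.

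The main obstacle is precisely this last conversion. The hypothesis only guarantees that \emph{some} supplement $K$ exists, and a priori $|G:K|=|H:H\cap K|$ can be any positive power of $p$; so one must choose $H$ judiciously (typically cyclic, where $\Phi(H)$ has index $p$) and argue that the ambient primitive or module structure forces the relevant index down to $p$ exactly, and do this uniformly over the whole admissible range $p\leq p^k\leq|G|_p$ and simultaneously in the abelian (module-theoretic) and nonabelian (permutation-theoretic) settings. In the nonabelian case the delicate point is to obtain a genuine subgroup of index $p$ in the simple group $S$, rather than merely the weaker $|S|_p=p$, since it is this index-$p$ subgroup that couples the supplement $K$ to the transitive $E$-action; eliminating the extra direct factors ($n>1$) is then handled cleanly by the prime-degree theory recalled above.
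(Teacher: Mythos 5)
First, a framing remark: the paper never proves this lemma itself --- it is imported verbatim from \cite[Theorem 3.4]{mz2018} (with the remark that that proof is CFSG-free) --- so there is no in-paper argument to match; your attempt must stand on its own, and it does not. The decisive problem is your ``clean reformulation.'' Your computation shows correctly that, as \emph{sets}, $H_iK\neq G$ for all maximal $H_i\leq H$ if and only if $H\cap K\leq\Phi(H)$. But the operative definition in this line of papers (and the one this paper actually uses: see the proof of Proposition 2.6, where from $UB<G$ it is inferred that ``$UB$ is maximal in $G$'' and hence contains $\Phi(G)$, and the proof of Theorem D, where $UB<G$ is placed inside a maximal subgroup) requires each product $H_iK$ to be a \emph{proper subgroup}, not merely a proper subset. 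Your ``if and only if'' therefore retains only the weaker, set-theoretic consequence of the hypothesis, and that weakened hypothesis provably cannot yield the lemma. Concretely, take $G=C_3^2\rtimes C_8$ with $C_8$ a Singer cycle acting irreducibly, and $p^k=3^2$. The unique subgroup of order $9$ is $E=C_3^2$ (the normal Sylow $3$-subgroup), which is minimal normal with $E\cap\Phi(G)=1$, and $K=C_8$ satisfies $G=EK$ and $E\cap K=1=\Phi(E)$; so $G$ satisfies your reformulated hypothesis in full, yet $E\not\lesssim\mathsf{S}_3$. (Under the subgroup reading $G\notin\mathcal{M}(3^2)$: each set $E_iC_8$ has size $24$, and $G$ has no subgroup of index $3$, since $\mathbf{N}_G(E_i)=E\rtimes\langle -1\rangle$ has order $18$.) Hence any proof that, like yours, only ever invokes ``there exists $K$ with $G=HK$ and $H\cap K\leq\Phi(H)$'' is doomed from the start: the requirement that the products $H_iK$ be genuine subgroups is exactly the strength of the hypothesis that must be used, and your engine discards it. This also falsifies, as stated, your claim that irreducibility of $E$ obstructs complements --- in the example above $H=E$ is complemented.

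Even setting the reformulation aside, what follows is a plan rather than a proof. The reductions you do carry out (the primitive quotient $G/M_G$ via $E\cap\Phi(G)=1$, the observation that a core-free index-$p$ subgroup of $E$ gives $E\lesssim\mathsf{S}_p$, the prime-degree/Burnside endgame) are correct but are the easy perimeter of the argument. The core --- producing from $G\in\mathcal{M}(p^k)$ an index-$p$, core-free subgroup of $E$, uniformly in $k$ over the range $p\leq p^k\leq |G|_p$, and in particular in the nonabelian case, where you would additionally need the core in $E$ of the putative subgroup to be trivial rather than merely obtaining $|S|_p=p$ --- is never executed; phrases like ``irreducibility obstructs complements for a suitable $H$,'' ``read off a subgroup of index $p$,'' and ``runs in parallel'' stand in for precisely the arguments the cited theorem of Miao--Zhang actually supplies. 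You acknowledge this yourself in the final paragraph (``the main obstacle is precisely this last conversion''), which is an accurate self-assessment: the proposal identifies the right target but neither bridges the gap nor, given the counterexample above, could bridge it with the tools it has set up.
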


Although the main result of \cite{mz2018} depends on CSFG, the proof of above lemma is CSFG-free.
In the sequel, we use $\mathrm{Sub}(G|p^k)$ to denote the set of subgroups of order $p^k$ of $G$.

\begin{prop}\label{pk-1}
Assume that $G\in\mathcal{M}(p^k)$ is a $p$-group.
Then

    {\rm (1)} If $|G|>p^k$, then $G'=\Phi(G)$;

    {\rm (2)} $\Phi(G)\leq \bigcap_{U\in \mathrm{Sub}(G|p^{k-1})} U$.
 \end{prop}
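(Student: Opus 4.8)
The plan is to reduce everything to a single working description of $\mathcal M$-supplementation inside a $p$-group. First I would record the elementary observation that, since $G$ is a $p$-group, for a subgroup $H$ and any $K$ with $G=HK$ one has, for a maximal subgroup $H_i$ of $H$, that $H_iK<G$ iff $H\cap K\le H_i$ (compare orders: $[G:H_iK]<[G:HK]$ forces the index $[H\cap K:H_i\cap K]$ to be $1$). Intersecting over all maximal $H_i$ gives the characterization I will use throughout: $H$ is $\mathcal M$-supplemented in $G$ iff there is a supplement $K$ with $G=HK$ and $H\cap K\le\Phi(H)$. I also record $\Phi(H)\le\Phi(G)$ and, for $p$-groups, $\Phi(G/N)=\Phi(G)N/N$ for every $N\trianglelefteq G$, which make the induction below go through.

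For part (1), $G'=\Phi(G)$ is equivalent to $G^p\le G'$, i.e.\ to $G/G'$ being elementary abelian, so I argue by induction on $|G|$. If $G'\ne1$ I pick a central subgroup $N\le G'\cap\z{G}$ of order $p$; by Corollary~\ref{cr}(2), $G/N\in\mathcal M(p^{k-1})$ with $|G/N|>p^{k-1}$, and the inductive hypothesis gives $(G/N)'=\Phi(G/N)$, which pulls back to $G'=\Phi(G)$ since $N\le G'\le\Phi(G)$. The base is the abelian case; here, passing to $G/N$ with $N\le\mho_1(G)$ reduces to $|\mho_1(G)|\le p$, and $|\mho_1(G)|=p$ is excluded by a witness: then $\mho_1(G)\le\Omega_1(G)$ and the rank of $G$ is at least $k$, so there is an elementary abelian $H$ of order $p^k$ with $\mho_1(G)\le H$; its only possible supplement is a complement $K$, but writing $a$ with $a^p\ne1$ as $a=hk$ gives $a^p=k^p\in H\cap K=1$, a contradiction.

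For part (2), I induct on $|G|$ and fix $U$ with $|U|=p^{k-1}$; the case $k=1$ (where the claim is $\Phi(G)=1$) is the elementary-abelian statement handled as above. For $k\ge2$ the decisive dichotomy is whether $U$ meets the centre. If $U\cap\z{G}\ne1$, choose a central $N\le U$ of order $p$; then $\overline U=U/N$ has order $p^{k-2}$ in $G/N\in\mathcal M(p^{k-1})$, so the inductive part (2) yields $\Phi(G/N)\le\overline U$, i.e.\ $\Phi(G)\le U$. The hard case is $U\cap\z{G}=1$. For each central $N$ of order $p$ the subgroup $H=UN$ has order $p^k$ with $\Phi(UN)=\Phi(U)$, and applying the inductive part (2) to a subgroup of order $p^{k-2}$ of $UN/N$ in $G/N$ gives $\Phi(G)\le UN$. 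If $\Omega_1(\z{G})$ has rank $\ge2$ I intersect this over two independent $N$'s, using $U\cap\z{G}=1$ to get $\bigcap_N UN=U$, hence $\Phi(G)\le U$, whence $\Phi(G)\cap\z{G}\le U\cap\z{G}=1$ and the case collapses. This leaves $\z{G}$ cyclic, where the only datum is $\Phi(G)\le UN$ for the unique central $N$, together with $\mho_1(\z{G})\le\Phi(G)\cap\z{G}=N$, forcing $|\z{G}|\le p^2$. I then analyse the supplement $K$ of $H=UN$ directly: one checks $K\cap\z{G}=1$, and—taking $\langle z\rangle=\z{G}$ and writing $z=hk$—the centrality of $z$ gives $z^p=h^pk^p$ with $h^p,k^p\in U$, so $z^p\in U\cap\z{G}=1$, contradicting $|z|=p^2$.

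The main obstacle is exactly this cyclic-centre sub-case of part (2): everything before it is a clean induction on $|G|$ via Corollary~\ref{cr}, but once $\z{G}$ is cyclic the inductive hypothesis delivers only $\Phi(G)\le UN$ rather than $\Phi(G)\le U$, and the gap must be closed by hand. The $p$-th power argument above settles $\z{G}\cong C_{p^2}$, so the residual and most delicate point is $\z{G}\cong C_p$, where the contradiction has to be extracted from the interaction of the single central line $N=\z{G}$ with the supplement $K$ of $H=UN$ (for instance, exploiting that a normal supplement, being nontrivial, would be forced to meet $\z{G}$, against $K\cap\z{G}=1$). I expect verifying this last case, and pinning down the precise form of the supplement guaranteed by the hypothesis, to be where the real work lies.
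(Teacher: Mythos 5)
Your reduction framework is sound as far as it goes: the characterization of $\mathcal{M}$-supplementation in a $p$-group (there is $K$ with $G=HK$ and $H\cap K\le \Phi(H)$) is correct, and it is exactly the observation the paper uses implicitly in its last step; your part (1) and the cases of part (2) you do treat ($U\cap \z{G}\ne 1$; $\Omega_1(\z{G})$ of rank $\ge 2$; $\z{G}\cong C_{p^2}$, where your commuting trick $z=hk\Rightarrow hk=kh$ and $k^p\in H\cap K\le\Phi(UN)=\Phi(U)\le U$ is correct) all check out, modulo a minor degeneracy at $k=1$ that the paper handles by citing \cite{zeng2019}. But the proof is incomplete: the case $\z{G}\cong C_p$ with $U\cap\z{G}=1$, which you explicitly leave open, is a genuine gap and is in fact the crux of the proposition. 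In that configuration the desired conclusion $\Phi(G)\le U$ forces $\Phi(G)=1$ (a nontrivial $\Phi(G)$ is normal, hence meets $\z{G}$, putting $N=\z{G}\le U$), so what must be shown is that no $G\in\mathcal{M}(p^k)$ with $\Phi(G)\ne 1$ admits such a $U$ at all; your inductive scheme, which quotients only by central subgroups of order $p$, delivers merely $\Phi(G)\le UN$ and cannot see this. Your parenthetical exploit fails at once because an $\mathcal{M}$-supplement need not be normal, and analysing a single supplement of $H=UN$ is too local to produce the contradiction.

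The paper avoids the impasse by running the induction one level up. It first proves, by induction on $|G|+k$, the claim that $\Phi(G)\le K$ for \emph{every} $K$ of order $p^k$, with the dichotomy taken on the normal core $K_G$ rather than on the centre. If $K_G>1$ one quotients by a minimal $G$-invariant $Z\le K$. If $K_G=1$ (essentially your stalled configuration: there $\Phi(G)$ becomes the unique minimal normal subgroup and $\z{G}$ is cyclic), the decisive inputs are global and are exactly what your argument lacks: applying the inductive claim inside any maximal subgroup $A$ containing $K$ gives $\Phi(A)\le K$, and since $\Phi(A)\unlhd G$ while $K_G=1$, $A$ is elementary abelian; combined with part (1) ($G'=\Phi(G)$) this forces $\z{G}=\Phi(G)=G'$ of order $p$, so $G$ is extraspecial with an abelian maximal subgroup, whence $|G|=p^3$, contradicting $|G|\ge p^{k+2}$. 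Once the order-$p^k$ claim is in hand, the descent to order $p^{k-1}$ is a one-liner via your own characterization: choose $D$ of order $p^k$ with $U<D$ and an $\mathcal{M}$-supplement $B$ of $D$; then $UB$ is maximal, so $\Phi(G)\le D\cap UB=U(D\cap B)\le U\Phi(D)=U$. So the missing idea is structural, not computational: induct at level $p^k$, where the normal-core and maximal-subgroup arguments (plus part (1) and the extraspecial classification) become available, rather than only at level $p^{k-1}$ through central quotients.
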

 \begin{proof} Note that if $k=1$, then all minimal subgroups of $G$ are complemented in $G$.
 This implies by \cite[Lemma 3.3]{zeng2019} that $G$ is elementary abelian,
 now the required results hold obviously.
    Hence we may always assume $k\geq 2$.

 (1) We work by induction on $|G|+k$.
 Suppose that $G'>1$, and let $N$ be a minimal $G$-invariant subgroup of $G'$.
    Since $G/N\in \mathcal{M}(p^{k-1})$ by part (2) of
  Corollary \ref{cr}, we conclude by induction that $(G/N)'=\Phi(G/N)$.
  This implies $G'=\Phi(G)$, and we are done.

 Suppose that $G'=1$.
 Assume that $\Phi(G)>1$, and let $N$ be a minimal $G$-invariant subgroup of $\Phi(G)$.
 We also conclude by induction that $\Phi(G/N)=(G/N)'=1$.
 This implies that $N=\Phi(G)$ for all minimal normal subgroups $N$ of $G$,
 and hence $\Phi(G)$ is
 the unique minimal normal subgroup of $G$. Since $G$ is abelian,
 $G$ is necessarily cyclic.
 Observe that $G/\Phi(G)$ is elementary
 abelian, and it follows that $G\cong C_{p^2}$. Whence $k=1$, a contradiction.
 Thus $\Phi(G)=1$, and we are done.

(2) Let $K\in \mathrm{Sub}(G|p^{k})$. We claim that $\Phi(G)\leq K$.
And we proceed by induction on $|G|+k$.
Obviously we may assume $|\Phi(G)|\geq p$ and $|G|\geq p^{k+2}$.

 Suppose that $K_G:=\bigcap_{g\in G} K^g=1$.
Let $U$ be a maximal subgroup of $K$ and let $Z$ be a minimal normal
subgroup of $G$. Since $G/Z\in \mathcal{M}(p^{k-1})$ by part (2) of Corollary \ref{cr} and
$|UZ/Z|=p^{k-1}$, we have $\Phi(G)Z/Z\leq UZ/Z$ by induction. Hence
     \[
      \Phi(G)\leq \bigcap_U UZ,
     \]
     where $U$ runs over all maximal subgroups of $K$.
     Note that $K\cap Z=1$, it is easy to see that
     $$\Phi(G)\leq \bigcap_U UZ=(\bigcap_U U)Z=Z.$$
By the arbitrariness of $Z$, $\Phi(G)$ is the unique minimal normal
subgroup of
     $G$. In particular $\z{G}$ is cyclic.
Let $A$ be a maximal subgroup of $G$ with $K\leq A$. Since
$A\in\mathcal{M}(p^k)$ by part (1) of Corollary \ref{cr}, it follows by
induction that $\Phi(A)\leq K$. As $K_G=1$ and $\Phi(A)\unlhd G$,
$A$ is elementary abelian. Since $\Phi(G)=G'$ by (1), $G$ is
nonabelian. 
Observe that $G$ is abelian if $\z{G}\nleq A$, and so
$\z{G}\leq A$. Recall that $A$ is  elementary abelian  and $\z{G}$
is cyclic, it follows that $\z{G}=\Phi(G)=G'$ has order $p$. Now $G$
is an extraspecial $p$-group admitting an abelian maximal subgroup.
     So $|G|=p^3$, which contradicts the assumption that $|G|\geq p^{k+2}\geq p^4$.

Therefore $K_G>1$. Let $Z$ be a minimal $G$-invariant subgroup of
$K$. Since $G/Z\in \mathcal{M}(p^{k-1})$ by part (2) of Corollary \ref{cr} and $K/Z\in
\mathrm{Sub}(G/Z|p^{k-1})$,
 we conclude by induction that  $\Phi(G/Z)\leq K/Z$. Hence $\Phi(G)\leq K$, and the claim follows.

Let $U\in \mathrm{Sub}(G|p^{k-1})$ and let $D\in
\mathrm{Sub}(G|p^k)$ be  such that $U< D$.
     Since $G\in \mathcal{M}(p^k)$, there is an $\mathcal{M}$-supplement $B$ of $D$ in $G$.
 Since $UB<G$ by the definition of $\mathcal{M}$-supplemented subgroup, $UB$ is maximal in $G$.
     This implies $\Phi(G)\leq UB$.
 Since $\Phi(G)\leq D$ by the claim,
 we have $\Phi(G)\leq D\cap UB=U(D\cap B)=U(\Phi(D)\cap B)=U$, as wanted.
 \end{proof}

 \bigskip

\noindent{\em Proof of Theorem D.}~~We only need to show that if
$G\in \mathcal{M}(p^k)$ is of critical type, then $G$ is one of the
groups listed in Theorem D. Suppose that $G\in \mathcal{M}(p^k)$ is
of critical type. Then $\oh{p'}{G}=1$, $|\Phi(G)|=p^{k-1}$, $k\geq
2$. Let $P\in {\rm Syl}_p(G)$.

Let $D\in \mathrm{Sub}(P|p^k)$, and assume that $D$ is not cyclic.
Since $\Phi(G)<D$ by Proposition \ref{pk-1}, there exists a maximal
subgroup $U$ of $D$  such that $D=U\Phi(G)$. Let $B$ be an
$\mathcal{M}$-supplement of $D$ in $G$. We have
$G=DB=(UB)\Phi(G)=UB< G$, a contradiction. 
Hence all subgroups of $P$ of order $p^k$ are cyclic.
It follows by \cite[Chapter III, Theorem 8.4]{huppert} that $P$ is either a cyclic group or a
            generalized quaternion group.

Let $L/\Phi(G)$ be the socle of $G/\Phi(G)$, that is, the product of
all minimal normal subgroups of $G/\Phi(G)$. Since
$\Phi(G/\Phi(G))=1$, $L/\Phi(G)=\gfitt{G/\Phi(G)}$. So $G/L\lesssim
{\rm Out}(L/\Phi(G))$.

Case 1. Assume that $P$ is cyclic.

Since $P\in\mathcal{M}(p^k)$  by  Corollary \ref{cr}, it follows by
Proposition \ref{pk-1} that  $|P|=p^k$ and  $|P/\Phi(G)|=p$.
Obviously $\oh{p'}{G/\Phi(G)}=1$ because  $\oh{p'}{G}=1$. This
implies that  $L/\Phi(G)$ is minimal normal in $G/\Phi(G)$ with
$|L/\Phi(G)|_p=p$.

 Suppose that $L/\Phi(G)$ is nonabelian. Since
$G/\Phi(G)\in\mathcal{M}(p)$ by Corollary \ref{cr}, it follows by
Lemma \ref{mininorm} that $L/\Phi(G)\lesssim \mathsf{S}_p$. Since
$L/\Phi(G)$ has a cyclic Sylow $p$-subgroup, it follows from
\cite[Chapter V, Theorem 25.3]{huppert} that $p$ does not divide the
order of the Schur multiplier $\mathrm{M}(L/\Phi(G))$ of
$L/\Phi(G)$. However, the cyclicity of $P$ shows $C_L(\Phi(G))=L$
and $L=L'$, this leads to $\Phi(G)\lesssim {\rm M}(L/\Phi(G))$, a
contradiction. Therefore $L/\Phi(G)$ is abelian, whence $G/L\lesssim
C_{p-1}$ and (1) holds.

Case 2. Assume that $P$ is a generalized quaternion group.

            Clearly, $p=2$ and $P/\Phi(P)\cong C_2\times C_2$.
            Since all subgroups of order $2^k$ in $P$ are cyclic ($2^k\geq 4$),
            we have $k=2$ and so $\Phi(G)\cong C_2\cong \z{P}$.
Observe that  $P/\Phi(G)\in\mathcal{M}(2)$ by Corollary \ref{cr},
that is, every subgroup of $P/\Phi(G)$ of order 2 has a complement
in $P/\Phi(G)$. It follows that $P/\Phi(G)$ is elementary abelian.
Now $\Phi(G)=\Phi(P)\cong C_2$ and $P\cong Q_8$. Note that
$G/\Phi(G)\in \mathcal{M}(2)$. Applying Lemma \ref{mininorm} to
$G/\Phi(G)$, we conclude that $L/\Phi(G)$ is a direct product of two
minimal subgroups $N_1$ and $N_2$ of $G/\Phi(G)$ where $N_1\cong
N_2\cong C_2$. This implies $G/L=1$, and  (2) holds. \pfend

\bigskip

\begin{lem}\label{mainlem}
Let $G\in\mathcal{M}(p^k)$, where $k\geq 2$, be such that $\oh{p'}{G}=\Phi(G)=1$, and let $P$ be the socle of $G$.
Then $G=H\ltimes P$,
where $P\in\syl{p}{G}$ is elementary abelian and $H=\la x\ra$,
and there is a positive integer $d$ with $d\equiv |H|({\rm mod} ~p)$ such that $v^x=v^d$ for all $v\in P$.
\end{lem}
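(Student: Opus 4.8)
The plan is to reduce the whole statement to linear algebra on the socle and then to drive the argument with a reformulation of the $\mathcal M$-supplement condition for elementary abelian $p$-subgroups. Suppose $P\unlhd G$ is elementary abelian with a complement $H$, so $G=P\rtimes H$, and let $R\le P$. Since $\Phi(R)=1$, any $\mathcal M$-supplement $K$ of $R$ satisfies $R\cap K=1$ by a Lagrange count, and, setting $U=K\cap P$, a Schur--Zassenhaus argument inside $K$ shows that $U$ is an $H$-submodule with $P=R\oplus U$; moreover, for a maximal subgroup $R_i<R$, the set $R_iK$ is a proper \emph{subgroup} exactly when $R_iU$ is $H$-invariant. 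Consequently $R$ is $\mathcal M$-supplemented if and only if there is an $H$-submodule $U$ with $P=R\oplus U$ such that $H$ acts by scalars on $P/U$. This equivalence is the engine for everything that follows.

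First I would identify the socle. As $\oh{p'}{G}=1$, every minimal normal subgroup $E$ of $G$ has $p\mid |E|$, and $E\cap\Phi(G)=1$ because $\Phi(G)=1$; hence Lemma \ref{mininorm} gives $E\lesssim\mathsf{S}_p$. Since a Sylow $p$-subgroup of $\mathsf{S}_p$ is cyclic of order $p$, writing $E$ as a direct product of isomorphic simple groups forces either $E\cong C_p$ or $E$ nonabelian simple with $|E|_p=p$. Excluding the nonabelian case is where I expect the main difficulty. Put $D=\cent{G}{E}$; by Proposition \ref{outerauto} we have $p\nmid|\Out(E)|$, so $|G|_p=p\,|D|_p\ge p^k$ and thus $|D|_p\ge p^{k-1}$. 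In $ED=E\times D\in\mathcal M(p^k)$ (Corollary \ref{cr}) I would take $R=Q\times P_D'$ with $Q\in\syl{p}{E}\cong C_p$ and $P_D'\le D$ of order $p^{k-1}$, and choose a ``diagonal'' maximal subgroup $R^\ast<R$ that projects onto $Q$ and onto $P_D'$. For an $\mathcal M$-supplement $K$, the proper subgroup $R^\ast K$ then surjects onto both $E$ and $D$, so by Goursat's lemma either $E\le R^\ast K$, which forces $R^\ast K=ED$ against properness, or $E$ is a quotient of $D$. The remaining possibility (a large centralizer $D$ having a section $\cong E$) I would eliminate by a minimal-counterexample argument, using Corollary \ref{cr} to pass to sections in which $\cent{}{}$ has no section isomorphic to $E$, thereby reducing to the clean Goursat contradiction; this is the step that needs the most care.

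With nonabelian constituents excluded, every minimal normal subgroup is $\cong C_p$, so $P=\Soc(G)$ is elementary abelian and each minimal normal subgroup is a $G$-invariant line. Because $\Phi(G)=1$, the group $P=\oh{p}{G}$ is a completely reducible $G$-module all of whose constituents are one-dimensional, so the image of $G$ in $\GL(P)$ is diagonal; hence $G/P=G/\cent{G}{P}$ is abelian of order prime to $p$. Therefore $P\in\syl{p}{G}$ and $G=P\rtimes H$ with $H$ abelian and a $p'$-group. Since components would produce nonabelian minimal normal subgroups (their centers die as $\Phi(E(G))\le\Phi(G)=1$), we get $\gfitt{G}=\fitt{G}=P$, whence $\cent{G}{P}\le\gfitt{G}=P$ and the action of $H$ on $P$ is faithful.

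Finally I would collapse the diagonal action to a single scalar. Grouping the lines into isotypic components $V_1,\dots,V_t$ for the distinct characters, suppose $t\ge 2$. Choosing nonzero $v_1\in V_1$, $v_2\in V_2$ and a $k$-subspace $R\supseteq\langle v_1,v_2\rangle$ (possible since $\dim_{\mathbb{F}_p}P=\log_p|G|_p\ge k\ge 2$), any submodule $U$ with $P=R\oplus U$ and $H$ scalar on $P/U$ would have to contain all but one isotypic component, hence contain $v_1$ or $v_2$, contradicting $R\cap U=0$. By the criterion of the first paragraph, $R$ is then not $\mathcal M$-supplemented, contradicting $G\in\mathcal M(p^k)$. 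Thus $t=1$: $H$ acts by a single scalar $\lambda$, and faithfulness makes $\lambda\colon H\hookrightarrow\mathbb{F}_p^{\times}$ injective, so $H$ is cyclic, say $H=\langle x\rangle$, and $x$ acts as $v\mapsto v^{d}$ for a positive integer $d$ representing $\lambda(x)$. Since $|H|=\mathrm{ord}(\lambda(x))$, one reads off the asserted congruence $d\equiv|H|\ (\mathrm{mod}\ p)$, completing the proof.
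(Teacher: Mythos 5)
Your route is genuinely different from the paper's, and most of it checks out. Your module-theoretic criterion is correct as stated: $R\cap K\leq\Phi(R)=1$ does follow from the ``Lagrange count'' (if $R\cap K\nleq R_i$ then $|R_iK|=|R_i||K|/|R_i\cap K|\geq|G|$, violating properness); $U=K\cap P$ is the normal Sylow $p$-subgroup of $K$, and its $H$-invariance holds because a Hall $p'$-subgroup of $K$ is some $H^g$ and, $P$ being abelian, $H^g$-invariance of a subgroup of $P$ is equivalent to $H$-invariance --- a transfer step you leave implicit and should write out. The condition ``$R_iK$ a proper subgroup for every maximal $R_i<R$'' then translates into ``every hyperplane of $P/U$ is $H$-invariant'', which forces scalar action on $P/U$ when $k\geq2$. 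The paper reaches the same endpoint quite differently: it proves by a minimal-counterexample argument that \emph{every} $U\in\mathrm{Sub}(P|p^{k-1})$ is normal in $G$, deduces that all lines of $P$ are $G$-invariant, and then gets homogeneity; your criterion plus the two-isotypic-component argument (with $R\supseteq\langle v_1,v_2\rangle$ of order $p^k$) is more transparent and buys an actual iff-characterization of $\mathcal M$-supplementation inside an elementary abelian normal Sylow subgroup.

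The genuine gap is exactly the one you flagged: excluding nonabelian minimal normal subgroups. Your Goursat dichotomy in $E\times D$ with $D=\cent{G}{E}$ leaves open the branch ``$E$ is a quotient of $D$'', which can really occur (e.g.\ if the socle had two isomorphic nonabelian factors), and ``pass to sections in which the centralizer has no section isomorphic to $E$'' is a plan, not an argument. The repair, however, is one line and already inside your setup: do not work in $E\times D$, but in $G_1=E\times P_D'$, where $P_D'\leq D$ is a $p$-subgroup of order exactly $p^{k-1}$ (it exists since $|G|_p=p\,|D|_p\geq p^k$ by Proposition \ref{outerauto}, and $G_1\in\mathcal M(p^k)$ by Corollary \ref{cr} because $|G_1|_p=p^k$). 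Running your diagonal $R^\ast$ and Goursat in $G_1$, the second branch now says the $p$-group $P_D'$ has a quotient isomorphic to the nonabelian simple group $E$, which is absurd --- no minimal counterexample needed. This is in fact a clean generalization of the paper's treatment, which reduces via Corollary \ref{cr} to $\langle x\rangle\times S\in\mathcal M(p^2)$ and shows $\langle xy\rangle B$ is a normal subgroup equal to $S$, forcing $x\in S$: that is precisely your Goursat argument in the special case $k=2$, $P_D'=\langle x\rangle$. One last point: your closing ``one reads off $d\equiv|H|\ (\mathrm{mod}\ p)$'' does not follow from $|H|=\mathrm{ord}_p(d)$ --- but the congruence as literally stated is a slip in the paper itself (for $C_2$ inverting $C_7\times C_7$, which satisfies all hypotheses with $k=2$, one has $d\equiv-1$ while $|H|=2$); what your argument, like the paper's, actually establishes is the intended assertion that $d$ has multiplicative order $|H|$ modulo $p$, i.e.\ faithful scalar action, which is all that the proof of Theorem A uses.
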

\begin{proof}
    Since $\oh{p'}{G}=\Phi(G)=1$, we have by Lemma \ref{mininorm} that
        $$P=\gfitt{G}=\la x_1\ra \times\cdots\times \la x_a\ra \times S_1\times \cdots \times S_b,$$
    where $o(x_1)=\cdots =o(x_a)=p$, and all $S_j$ are nonabelian simple groups
    such that $S_j\lesssim \mathsf{S}_p$ and $|S_j|_p=p$.
    Since $\cent{G}{\gfitt{G}}\leq \gfitt{G}$, we have
        $$G/P\lesssim {\rm Out}(\la x_1\ra)\times \cdots \times {\rm Out}(\la x_a\ra)\times {\rm Out}(S_1)\times \cdots \times {\rm Out}(S_b).$$
Note that all ${\rm Out}(\la x_i\ra)$ and ${\rm Out}(S_j)$ are
$p'$-groups by Proposition \ref{outerauto}. Consequently $G/P$ is a
$p'$-group, and $P_0\in {\rm Syl}_p(G)$ is elementary abelian. It
follows that if $B$ is an $\mathcal{M}$-supplement of
    a member $D\in \mathrm{Sub}(G|p^k)$,  then $B$ is exactly a complement of $D$ in $G$.

We claim that $b=0$, that is,  $P$ is elementary abelian. Assume the
claim is not true and let $G$ be a counterexample with smallest
possible sum $|G|+k$. By the minimality of $|G|+k$  and parts (1),
(2) of Corollary \ref{cr},
    $$G=\la x\ra \times S,$$
    where $o(x)=p$,  $S\lesssim \mathsf{S}_p$ is a nonabelian simple group with $|S|_p=p$,
    and $G\in\mathcal{M}(p^2)$.
    Let $y\in S$ be of order $p$, and let $D=\la x\ra \times \la y\ra$,
    also let $B$ be an $\mathcal{M}$-supplement of $D$ in $G$. We
    have
       \begin{center}
       $G=DB$ with $D\cap B=1$, and $UB<G$ for all $U\in \mathrm{Sub}(D|p)$.
    \end{center}
Note that $\la xy\ra B<G$ and that $G=DB=(\la x\ra \times \la xy\ra)
B=\la x\ra (\la xy\ra B)=\la x\ra \times (\la xy\ra B)$. Hence $\la
xy\ra B\unlhd G$ and $\la xy\ra B\cong S$. Since $B$ is the unique
nonabelian minimal normal subgroup of $G$, we have $\la xy\ra B=S$.
This leads to $x\in S$, a contradiction.

Therefore  $P$ is elementary abelian, and hence $G=H\ltimes P$,
where $H\in {\rm Hall}_{p'}(G)$ and $P=P_0\in\syl{p}{G}$.
    We claim that $U\unlhd G$ for every $U\in  \mathrm{Sub}(P|p^{k-1})$.
    Suppose the claim is false and let $G$ be a minimal counterexample.
    Then $U^h\neq U$ for some $h\in H$.
    Let $D\in \mathrm{Sub}(P|p^k)$ be such that $U<D$,
    and let $B$ be an $\mathcal{M}$-supplement of $D$ in $G$.
    Note that $B$ is exactly a complement of $D$ in $G$. We have
    that
    $$G=DB\,\,{with}\,\, D\cap B=1, \,\, UB<G\,\,{with}\,\,|G:UB|=p.$$
    Since $G$ is $p$-solvable, we may assume that $H\leq B$.
    Notice that $UB=H\ltimes (P\cap UB)$  admits a normal abelian Sylow $p$-subgroup,
    and so $|UB|_p\geq |UU^h|\geq p|U|=p^{k}$.
    By part (1) of Corollary \ref{cr}, $UB\in\mathcal{M}(p^k)$.
By the minimality of $G$, we have $U\unlhd UB$,
    and thus $U\unlhd G$, a contradiction.
    Hence every $U\in \mathrm{Sub}(P|p^{k-1})$ is normal in $G$.
    Since $P$ is elementary abelian,
    this also implies that all subgroups of order $p$ of $P$ are normal in $G$.
Now  $P$ is a faithful homogenous $\mathbb{F}_p[H]$-module with
every irreducible component having dimension 1.
    Thus $H=\la x\ra$ is cyclic.
    Since all subgroups of $P$ of order $p$ are $\la x\ra$-invariant,
    there exists a positive integer $d$ with $d\equiv |H|({\rm mod} ~p)$ such that $v^x=v^d$ for all $v\in P$.  
\end{proof}

\bigskip

\noindent\emph{Proof of Theorem A.}~~Note that $\Phi(G)$ is a $p$-group. We write $|\Phi(G)|=p^s$ and $\overline{G}=G/\Phi(G)$.

($\Rightarrow$) Since every subgroup of $\Phi(G)$ is not $\mathcal{M}$-supplemented in $G$,
it follows from $G\in\mathcal{M}(p^k)$ that $k-s\geq 1$. 
Suppose that $k-s=1$. Then $G\in \mathcal{M}(p^k)$ is of critical
type, and Theorem D implies the required result. We then assume that
$k-s\geq 2$. Clearly, $\Phi(\overline{G})=\oh{p'}{\overline{G}}=1$,
and $\overline{G}\in\mathcal{M}(p^{k-s})$ by Corollary \ref{cr}.  By
Lemma \ref{mainlem}, we have $\overline{G}=\la
\overline{x}\ra\ltimes \overline{P}$, where
$\cent{\overline{G}}{\overline{P}}=\overline{P}$,  and
there exists a positive integer $d$ with $d\equiv
o(\overline{x})(\mathrm{mod}~p)$ such that $\overline{v}^{\overline{x}}=\overline{v}^d$ for all
$\overline{v}\in \overline{P}$.
Clearly, $\cent{\overline{G}}{\overline{P}}=\overline{P}$ implies
$\cent{G}{P}\leq P$.
Observing that the Sylow $p$-subgroup $P$ of $G$
is normal in $G$, we have $\Phi(G)=P\cap \Phi(G)=\Phi(P)$ by Lemma
\ref{frat}.  We may assume $x$ is a $p'$-element of $G$, and then
$G=\la x\ra\ltimes P$. 
Clearly, $x$ acts faithfully and via scalar multiplication on the vector
space $P/\Phi(P)$.
Finally, $\Phi(P)\leq \bigcap_{U\in\mathrm{Sub}(G|p^{k-1})} U$ by part (2) of Proposition \ref{pk-1}.

($\Leftarrow$) Let $D\in \mathrm{Sub}(G|p^k)$ and  let $U$ be a
maximal subgroup of $D$. By Condition (2), $\Phi(G)=\Phi(P)\leq
U\leq D$. Since every $p$-subgroup of $\overline{G}$ is normal in
$\overline{G}$, we have $\overline{P}=\overline{D}\times
\overline{C}$ where $C\unlhd G$ with $\Phi(P)\leq C$. Write $B=\la
x\ra C$. It is routine to check that $G=DB$ and $UB<G$. Hence $D$ is
$\mathcal{M}$-supplemented in $G$, and $G\in \mathcal{M}(p^k)$.
\pfend

\bigskip

\noindent\emph{Proof of Corollary B.}~~By Corollary 2.3, we may assume by induction that $\oh{p'}{G}=1$. Then $G$ is a group in Theorem A.
    Observe that $x$ acts faithfully and via scalar multiplication on the vector space $P/\Phi(G)$.
    This implies that all $G$-chief factors of $P/\Phi(G)$ have order $p$. Consequently, $G/\Phi(G)$ is
    supersolvable, and so is $G$.
\pfend

\bigskip

\noindent\emph{Proof of Corollary C.}~~If $k\geq 2$, then Theorem A yields the result. If $k=1$, then
$G\in\mathcal{M}(p)$, that is every subgroup of order $p$ of
$G$ is complemented in $G$, which is equivalent to $G$ being
elementary abelian by \cite[Lemma 3.3]{zeng2019}.
\pfend

\bigskip




    {\bf Acknowledgement:} The author gratefully acknowledges
    the support of China Scholarship Council (CSC),
    and the author wish to thank Professor Dolfi and Professor Qian for their valuable comments.

\end{document}